\newcommand{\R}{{\mathcal R}}
\newcommand{\C}{{\mathbb C}}
\newcommand{\T}{{\tt t}}
\newcommand{\End}{\operatorname{End}}
\newcommand{\Aut}{\operatorname{Aut}}
\newcommand\ad{\operatorname{ad}}
\newcommand\Hom{\operatorname{Hom}}
\numberwithin{equation}{section}\theoremstyle{plain}
\newtheorem{theorem}{Theorem}[section]
\newtheorem{lemma}[theorem]{Lemma}
\newtheorem{corollary}[theorem]{Corollary}
\newtheorem{proposition}[theorem]{Proposition}
\theoremstyle{definition}
\theoremstyle{remark}
\def\pf{\begin{proof}}
\def\epf{\end{proof}}
\theoremstyle{remark}
\def\h4n{\hspace{-0.4cm}}
\def\R{\mathbb{R}}
\def\[{|\hspace{-0.2ex} [}
\def\]{]\hspace{-0.2ex} |}
\newcommand\g{\mathfrak g}
\renewcommand\h{\mathfrak h}
\renewcommand\-[1]{\,-_{\!\scriptscriptstyle #1}\,}
\renewcommand\End{\operatorname{End}}
\newcommand\Lie{\operatorname{Lie}}
\newcommand\Cl{\operatorname{Cl}}
\begin{document}

\renewcommand{\baselinestretch}{1.2}
\thispagestyle{empty}

\title[Automorphisms of non-singular nilpotent Lie algebras]{Automorphisms of non-singular nilpotent Lie algebras}
\author[Kaplan and Tiraboschi]{Aroldo Kaplan}
\address{\noindent
Facultad de Matem\'atica, Astronom\'\i a y F\'\i sica, Universidad
Nacional de C\'ordoba.  CIEM -- CONICET. (5000) Ciudad
Universitaria, C\'ordoba, Argentina}
 \email{(kaplan, tirabo)@famaf.unc.edu.ar}
\author[]{Alejandro Tiraboschi}
\thanks{This work was partially supported by CONICET, ANPCyT and Secyt (UNC) (Argentina)}
\subjclass{17B30,16W25}
\date{\today}
\begin{abstract} For a real, non-singular, 2-step nilpotent Lie algebra $\mathfrak{n}$, the group $\Aut(\mathfrak{n})/\Aut_0(\mathfrak{n})$, where $\Aut_0(\mathfrak{n})$ is the group of automorphisms which act trivially on the center, is the direct product of a compact group with the 1-dimensional group of dilations. Maximality of some automorphisms groups of $\mathfrak{n}$ follows and is related to how close is $\mathfrak{n}$ to being of Heisenberg type. For example, at least when the dimension of the center is two, $\dim \Aut(\mathfrak{n})$ is maximal if and only if $\mathfrak{n}$ is type $H$. The connection with fat distributions is discussed.
\end{abstract}

\maketitle

\begin{section}{Introduction}\label{section1} A 2-step nilpotent real Lie algebra  $\mathfrak{n}$ with center $\mathfrak{z}$  is called {\it non-singular} \cite{E} if  $\ad x:  \mathfrak{n}\rightarrow  \mathfrak{z}$ is onto for any $x\notin\mathfrak{z}$. Equivalently, it is a vector-valued antisymmetric form
$$[\ ,\ ]: \mathfrak{v}\times \mathfrak{v}\rightarrow \mathfrak{z},$$
 $\mathfrak{v}=\mathfrak{n}/\mathfrak{z}$, such that the 2-forms $\lambda([u,v])$  on $\mathfrak{v}$ are non-degenerate for all $\lambda\in\mathfrak{z}^*$, $\lambda\not=0$.
We shall call such Lie algebras {\it fat algebras} for short, since they are the nilpotentizations, or symbols, of fat vector distributions.
While for $m=1$ there is  only one fat algebra up to isomorphisms, for $m\geq 2$ there is an uncountable number of isomorphism classes  and for $m\geq 3$ they form a wild set.

The group of automorphisms $\Aut(\mathfrak{n})$ is the semidirect product of the group $G(\mathfrak{n})$ of graded automorphisms of $\mathfrak{n}=\mathfrak{v}\oplus\mathfrak{z}$ with the abelian group $\Hom(\mathfrak{v},\mathfrak{z})$ times the group of dilations $(t,t^2)$.  Hence, we concentrate on $G(\mathfrak{n})$.
We prove that there is an exact sequence
$$
1\rightarrow G_0  \rightarrow  G \rightarrow O(m)
$$
where $G_0$ is the subgroup of $G $ of elements that act trivially on the center and $m$ is the dimension of this center.
In other words, there are positive metrics (inner products) on $\mathfrak{z}$ which are invariant under all of $\Aut(\mathfrak{n})$. If a metric $g$ is also given on $\mathfrak{v}$, as in the case of the nilpotentization of a  subriemannian structure, we also consider the subgroups $K_0$, $K $, of graded automorphisms that leave $g$ invariant, which define a compatible exact sequence
 $$
1\rightarrow K_0  \rightarrow  K \rightarrow O(m).
$$
Next, we compute the terms in this sequence and the images $G/G_0$ and $K/K_0$, proving that the exactness of
$$
1\rightarrow \Lie(K_0)  \rightarrow  \Lie(K) \rightarrow \mathfrak{so}(m)\rightarrow 1
$$
is equivalent to $\mathfrak{n}$ being of Heisenberg type, while the exactness of
$$
1\rightarrow \Lie(G_0)  \rightarrow  \Lie( G) \rightarrow \mathfrak{so}(m)\rightarrow 1
$$
is strictly more general.  As to $G_0(\mathfrak{n})$, we describe it in detail for the case $m=2$, leading a proof that, at least in that case, $\dim \Aut(\mathfrak{n})$ is maximal if and only if $\mathfrak{n}$ is of Heisenberg.

In the last section we explain the connection with the Equivalence Problem for fat subriemannian distributions.

Algebras of Heisenberg type, or {\it $H$-type}, arise as follows \cite{K}. If $\mathfrak{v}$ is a real unitary module over the Clifford algebra $\Cl(\mathfrak{z})$ associated to a quadratic form on $\mathfrak{z}$, the identity
$$<z,[u,v]>_\mathfrak{z} = <z\cdot u,v>_\mathfrak{v}$$
with $z\in  \mathfrak{z}\subset \Cl(\mathfrak{z})$, $u,v\in  \mathfrak{v}$,
defines a fat $[\ ,\ ]: \mathfrak{v}\times \mathfrak{v}\rightarrow \mathfrak{z}$. Alternatively, they are characterized by possessing a positive-definite metric such that the operator $z\cdot$ defined by the above equation satisfies $ z\cdot(z\cdot v)= - |z|^2 v$.

It follows from Adam's theorem on  frames on spheres \cite{H} that for any fat algebra there is an $H$-type algebra with the same $\dim\mathfrak{z}$ and $\dim\mathfrak{v}$. That these were, in some sense, the most symmetric, was expected from the properties of their sublaplacians \cite{BTV} \cite{CGN} \cite{GV} \cite{K}, but we found no explicit statements in this regard.
Finally, although the arguments below can be made more intrinsic, matrices are emphasized because they can be fed easily into MAGMA for the application of the methods of \cite{DG}.

\end{section}

\begin{section}{Automorphisms of fat algebras}\label{section2} Let $\mathfrak{n}$ be a 2-step Lie algebra with center $\mathfrak{z}$ and let $\mathfrak{v = n/z}$, so that
\begin{equation}\label{I}
\mathfrak{n}\cong \mathfrak{v}\oplus \mathfrak{z}
\end{equation}
and the Lie algebra structure is encoded into the map
$$
[\ ,\ ]: \Lambda^2\mathfrak{v}\rightarrow \mathfrak{z}.
$$
Let $n=\dim \mathfrak{v}$ and $m=\dim \mathfrak{z}$. Relative to a basis compatible with (\ref{I}), the bracket becomes an $\R^m$-valued antisymmetric form on $\R^n$ and an automorphism is a matrix of the form
$$\begin{pmatrix} a & 0 \\  c & b \end{pmatrix},\qquad a\in GL(n),\ b\in GL(m),\ c\in M_{n\times m}(\mathbb R)$$
such that
$$b([u,v])=[au,av].$$
$\Aut(\mathfrak{n})$ always contains the normal subgroup $\mathfrak{D(n)}$ of dilations and translations
$$\begin{pmatrix} tI_n & 0 \\  c & t^2I_m \end{pmatrix},\qquad t\in \R^*, \ c\in M_{n\times m}(\mathbb R).$$
Let
$$G=G(\mathfrak{n})=\{\begin{pmatrix} a & 0\\  0 & b \end{pmatrix},\  a\in SL(n),\ b\in GL(m),\ b([u,v])=[au,av]\}.$$
Then $\Aut(\mathfrak{n})$ is the semidirect product of $G(\mathfrak{n })$ with $\mathfrak{D(n)}$.
Let
$$G_0= G_0(\mathfrak{n}) = \{\begin{pmatrix} a & 0\\  0 & I_m \end{pmatrix},\ a\in SL(n),\  [au,av]=[u,v]  \},  $$
 the subgroup of automorphisms that act trivially on the center. These are Lie groups, $G_0$ is normal in $G$, and the quotient group
$$G/G_0 $$
can be identified with  the group of  $b\in GL(\mathfrak{z})$ such that
 $ b([u,v])=[au,av] $ for some $a\in SL(\mathfrak{v})$. Obviously,
\begin{equation}\label{igualdad}
\dim \Aut(\mathfrak{n}) = nm+1 + \dim(G/G_0) + \dim(G_0).
\end{equation}

\

 \begin{theorem}\label{th2.1}  Let  $\mathfrak{n}$ be a fat algebra with center $\mathfrak{z}$. Then there is a positive definite metric on $\mathfrak{z}$ invariant under $G(\mathfrak{n})$.
\end{theorem}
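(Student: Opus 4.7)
The plan is to produce a $G$-invariant, everywhere positive homogeneous polynomial on $\mathfrak{z}^{*}$, use its sublevel sets to trap the image of $G$ in $GL(\mathfrak{z})$ inside a compact subgroup, and then average an arbitrary inner product.

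The key object is the Pfaffian. For $\lambda \in \mathfrak{z}^{*}$ let $\omega_{\lambda}$ be the antisymmetric form on $\mathfrak{v}$ given by $\omega_{\lambda}(u,v)=\lambda([u,v])$. The fatness hypothesis says that $\omega_{\lambda}$ is nondegenerate whenever $\lambda\ne 0$, which forces $n=\dim\mathfrak{v}$ to be even and makes
$$P(\lambda):=\operatorname{Pf}(\omega_{\lambda})$$
a homogeneous polynomial of degree $n/2$ on $\mathfrak{z}^{*}$ that vanishes only at the origin. Consequently $P^{2}$ is a strictly positive homogeneous polynomial on $\mathfrak{z}^{*}\setminus\{0\}$.

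The crux is to show that $P^{2}$ is preserved by the natural action of $G$ on $\mathfrak{z}^{*}$. Given $(a,b)\in G$, the identity $b([u,v])=[au,av]$ rewrites as $\omega_{b^{T}\lambda}(u,v)=\omega_{\lambda}(au,av)$, i.e.\ in matrices $A_{b^{T}\lambda}=a^{T}A_{\lambda}a$, where $A_\lambda$ is the matrix of $\omega_\lambda$. Applying the Pfaffian transformation rule and using $\det a = 1$ (which is built into the definition of $G$), one gets $P(b^{T}\lambda)=P(\lambda)$, and a fortiori $P^{2}(b^{T}\lambda)=P^{2}(\lambda)$. This is the only substantive step in the argument; I expect it to be the main (minor) obstacle only in the sense that one has to remember the $\det a$ factor in the Pfaffian transformation rule and keep straight the roles of $b$ and its transpose on the dual.

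It remains to convert the existence of this invariant polynomial into a metric. The sublevel set $K=\{\lambda\in\mathfrak{z}^{*}:P^{2}(\lambda)\le 1\}$ is compact (by homogeneity and strict positivity of $P^2$ off the origin), contains $0$ in its interior, and is preserved setwise by the dual action of the image of $G$. A standard inscribed/circumscribed-ball argument (pick an auxiliary inner product on $\mathfrak{z}^*$, choose radii $r\le R$ with $B_r\subset K\subset B_R$, and note that any $g$ in the image satisfies $g(B_r)\subset K\subset B_R$ and similarly for $g^{-1}$) bounds both $\|g\|$ and $\|g^{-1}\|$, so the image of $G$ is relatively compact in $GL(\mathfrak{z})$. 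Averaging any positive definite inner product on $\mathfrak{z}$ over Haar measure on the compact closure of this image produces the desired inner product invariant under all of $G(\mathfrak{n})$, and hence under $\operatorname{Aut}(\mathfrak{n})$ since the additional dilation and translation factors act trivially on $\mathfrak{z}$ up to scaling, which is immaterial once one rescales the metric appropriately.
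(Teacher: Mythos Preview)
Your argument is correct and follows essentially the same route as the paper: both use the Pfaffian (the paper uses $\det T_z$, which is the square of your $\operatorname{Pf}(\omega_\lambda)$) as a $G$-invariant homogeneous polynomial nonvanishing off the origin, deduce that the image of $G$ in $GL(\mathfrak{z})$ is bounded and hence relatively compact, and then invoke the standard fact that a compact subgroup preserves some inner product. The only cosmetic differences are that you work on $\mathfrak{z}^{*}$ rather than $\mathfrak{z}$ and spell out the averaging step explicitly; your final remark about extending invariance to all of $\Aut(\mathfrak{n})$ is superfluous (and slightly off, since dilations genuinely rescale any metric on $\mathfrak{z}$), but the theorem only asks for $G(\mathfrak{n})$-invariance.
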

\begin{proof}
Fix arbitrary positive inner products on $\mathfrak{v}$ and $\mathfrak{z}$. For $z\in \mathfrak{z}$, $u,v\in \mathfrak{v}$
$$(T_zu,v)_\mathfrak{v} = (z,[u,v])_\mathfrak{z}$$
defines a linear map $z\mapsto T_z$ from $\mathfrak{z}$ to  $\End(\mathfrak{v})$. Clearly,
$$\mathfrak{n}\ fat\ \Leftrightarrow\   T_z\in GL(\mathfrak{v}) \ \forall z\not=0.$$
Hence the hypothesis insures that the Pfaffian
$$P(z)=\det(T_z)$$
is  non-zero   on $\mathfrak{z}\setminus \{0\}$. This is a homogeneous polynomial of degree $n$, so it satisfies
\begin{equation}\label{I'}k \|z\|^n\leq |P(z)| \leq K\|z\|^n
\end{equation}
where $k,K$ are the minimum and maximum values of $|P|$ on the unit sphere, which are positive.

Let now $g_{a,b} :=\begin{pmatrix} a & 0\\  0 & b \end{pmatrix}\in \Aut(\mathfrak{n})$. Then
$$T_{ b^\T z}=a^\T T_za$$
because
 $(T_{ b^\T z}u,v)_\mathfrak{v} = ( b^\T z,[u,v])_\mathfrak{z} = ( z,b([u,v]))_\mathfrak{z} = ( z,[au,av])_\mathfrak{z} = (T_zau,av)_\mathfrak{v} = ( a^\T T_zau,v)_\mathfrak{v}.$
Consequently
$$
P( b^\T z) = (\det a)^2 P(z).
$$
In particular, if $g\in G$ then $P( b^\T z) =  P(z)$.
This implies
$$
k \|b^\T z\|^n\leq |P(b^\T z)|= |P(z)| \leq K\|z\|^n
$$
for all $z$, therefore
 $\|b\| \leq  \sqrt[n]{K/k}$.
The group  of $b\in GL(\mathfrak{z})$ such that $g_{a,b}\in \Aut(\mathfrak{n})$ for some $a\in SL(\mathfrak{v})$, is therefore bounded in $\End( \mathbb{R}^m)$. Its closure is a compact Lie subgroup of $GL(\mathfrak{z})$, necessarily contained in  $O(\mathfrak{z})$ for some positive definite metric.

\end{proof}

From now on  $\mathfrak{z}$ will be assumed endowed with such invariant metric.
If a metric $g$ on $\mathfrak{v}$ is also fixed, as in the case of the nilpotentization of a subriemannian structure,   define
the groups
$$ K= K(\mathfrak{n},g) = \{\begin{pmatrix} a & 0\\  0 & b \end{pmatrix},\  a\in SO(\mathfrak{v}),\ b\in O(\mathfrak{\ z}),\ [au,av]=b[u,v]  \}$$
$$ K_0=K_0(\mathfrak{n},g) = \{\begin{pmatrix} a & 0\\  0 & I \end{pmatrix},\  a\in SO(\mathfrak{v}),\ [au,av]=[u,v]  \}.$$

Let $\mathfrak{g,g_0,k,k_0}$ be the Lie algebras of $G,G_0,K,K_0$ respectively. Then  there is the commutative diagram with exact rows
\begin{equation*}\label{ancla}
\begindc{0}[3]
 \obj(10,20)[A11]{$0$}
 \obj(20,20)[A12]{$\mathfrak{g}_0$}
 \obj(30,20)[A13]{$\mathfrak{g}$}
 \obj(42,20)[A14]{$\mathfrak{so}(m)$}
 \obj(10,10)[A21]{$0$}
 \obj(20,10)[A22]{$\mathfrak{k}_0$}
 \obj(30,10)[A23]{$\mathfrak{k}$}
 \obj(42,10)[A24]{$ \mathfrak{so}(m)$}
 \mor{A11}{A12}{}
 \mor{A21}{A22}{}
 \mor{A22}{A12}{}[\atleft,\solidarrow]
 \mor{A12}{A13}{}
 \mor{A13}{A14}{}
 \mor{A22}{A23}{}
 \mor{A23}{A24}{}
 \mor{A23}{A13}{}
 \mor{A24}{A14}{}
\enddc
\end{equation*}
where the vertical arrows are the inclusions.
Below we prove that the bottom sequence extends to
$$0\rightarrow \mathfrak{k}_0\ \rightarrow \mathfrak{k}\rightarrow \mathfrak{so}(m)\rightarrow 0$$
if and only if $\mathfrak{n}$ is type $H$. This is not the case for the top one: the condition that
$$0\rightarrow \mathfrak{g}_0\ \rightarrow \mathfrak{g}\rightarrow \mathfrak{so}(m)\rightarrow 0$$
is exact defines  a class of fat algebras strictly larger than type $H$.  We describe it in the next section for  $m=2$.

  \begin{proposition}\label{theorem2.2} Let  $\mathfrak{n}=\mathfrak{v}+ \mathfrak{z}$ be an algebra of type $H$. There is a metric on $\mathfrak{z}$ such that
 $\mathfrak{g/g_0 \cong so}(m)$.
\end{proposition}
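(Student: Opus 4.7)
The plan is to use Clifford multiplication to exhibit, for each elementary rotation generator of $\mathfrak{z}$, an explicit infinitesimal automorphism of $\mathfrak{n}$ realising it. Endow $\mathfrak{z}$ with the inner product belonging to the $H$-type structure; this metric is automatically $G$-invariant, because in $H$-type one has $\det(T_z)=c\,|z|^n$ (as all eigenvalues of the skew operator $T_z=J_z$ are $\pm i|z|$), so the identity $P(b^\T z)=(\det a)^2 P(z)$ from the proof of Theorem~\ref{th2.1}, together with $a\in SL(\mathfrak{v})$, forces $|b^\T z|=|z|$. The induced injection
$$\iota\colon\mathfrak{g}/\mathfrak{g}_0\hookrightarrow \mathfrak{so}(m),\qquad (A,B)\mapsto B,$$
is therefore well defined, where
$$\mathfrak{g}=\{(A,B)\in\mathfrak{sl}(\mathfrak{v})\oplus\mathfrak{so}(\mathfrak{z}):\,B[u,v]=[Au,v]+[u,Av]\},$$
and the content of the proposition is the surjectivity of $\iota$.

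I would first reformulate the derivation identity through the operators $J_z\in\End(\mathfrak{v})$ defined by $(J_z u,v)_\mathfrak{v}=(z,[u,v])_\mathfrak{z}$. Pairing with $z\in\mathfrak{z}$ and using $B^\T=-B$ together with antisymmetry $A^\T=-A$ of the $A$'s that will appear below, the condition becomes the clean identity
$$J_{Bz}=[A,J_z]\qquad (z\in\mathfrak{z}).$$
The $H$-type hypothesis supplies the Clifford relations $J_z J_{z'}+J_{z'}J_z=-2(z,z')_\mathfrak{z}\,I$, so orthonormal vectors yield anticommuting $J$'s squaring to $-I$.

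The construction is then: for each ordered orthonormal pair $z_1,z_2\in\mathfrak{z}$ set
$$A_{z_1,z_2}:=J_{z_1}J_{z_2}.$$
This operator is a product of two skew operators that anticommute, so it is itself skew and hence lies in $\mathfrak{sl}(\mathfrak{v})\cap\mathfrak{so}(\mathfrak{v})$. A direct three-case Clifford computation, using only anticommutation, yields $[A_{z_1,z_2},J_{z_1}]=2J_{z_2}$, $[A_{z_1,z_2},J_{z_2}]=-2J_{z_1}$, and $[A_{z_1,z_2},J_z]=0$ whenever $z\perp z_1,z_2$. Thus the unique $B_{z_1,z_2}\in\mathfrak{so}(\mathfrak{z})$ satisfying $J_{B_{z_1,z_2}z}=[A_{z_1,z_2},J_z]$ is twice the skew generator of the rotation in the $z_1 z_2$-plane, vanishing on its orthogonal complement, and $(A_{z_1,z_2},B_{z_1,z_2})\in\mathfrak{g}$. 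As $(z_1,z_2)$ ranges over orthonormal pairs the $B_{z_1,z_2}$ span $\mathfrak{so}(m)$, so $\iota$ is onto.

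The only step with any real content is the three-line Clifford computation of the commutators $[J_{z_1}J_{z_2},J_z]$; the rest is formal bookkeeping. Conceptually the argument just extracts, from the Clifford module structure, the classical infinitesimal lift $\mathfrak{so}(\mathfrak{z})\to\mathfrak{so}(\mathfrak{v})$, $e_i\wedge e_j\mapsto\tfrac12 J_{e_i}J_{e_j}$, of the spin representation, and observes that every such lift defines a derivation of the bracket of $\mathfrak{n}$.
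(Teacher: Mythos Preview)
Your argument is correct. Both your proof and the paper's exploit Clifford multiplication to lift orthogonal transformations of $\mathfrak{z}$ to automorphisms of $\mathfrak{n}$, but they do so at different levels. The paper works directly with the group: for each unit $z$ it checks that $(J_z,-r_z)\in G$, where $r_z$ is the hyperplane reflection orthogonal to $z$, and then observes that the $-r_z$ generate a finite-index subgroup of $O(\mathfrak{z})$. You work infinitesimally with the Lie algebra: for each orthonormal pair $(z_1,z_2)$ you produce a derivation $(J_{z_1}J_{z_2},B_{z_1,z_2})$ whose $\mathfrak{z}$-part is (twice) the elementary rotation in the $z_1z_2$-plane, and these span $\mathfrak{so}(m)$. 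Your construction is exactly the standard spin lift $e_i\wedge e_j\mapsto\tfrac12 J_{e_i}J_{e_j}$, while the paper's is the corresponding $Pin$-style statement one step down; exponentiating your elements gives products $J_{z_1}J_{z_2}$ paired with rotations, which is what one gets by composing two of the paper's reflections. Your route has the small bonus of verifying directly that the $H$-type metric on $\mathfrak{z}$ is itself $G$-invariant (via $\det T_z=c|z|^n$), whereas the paper leaves this implicit through Theorem~\ref{th2.1}; the paper's route has the advantage of actually landing in $G$ rather than just $\mathfrak{g}$, which feeds into the later $Pin(m)$ discussion in Theorem~\ref{thcliff}.
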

\begin{proof}
There is an inner product in $\mathfrak{v}$ such that the $J_i=T_i$'s satisfy the Canonical Anticommutation Relations
 $$J_w J_z+J_zJ_w = -2<z,w>I.$$
 For $\|z\|=1$ let $r_z\in O(\mathfrak{z})$ be the reflection through the hyperplane orthogonal to $z$ and $J_z\in SL(\mathfrak{v})$ be as above. Then
 $$g_{(J_z,-r_z)}=\begin{pmatrix} J_z & 0\\  0 & -r_z \end{pmatrix}\in \Aut(\mathfrak{n}).$$
 Indeed,
\begin{align*}
(w,[J_zu,J_zv])&= (J_w J_zu,J_zv ) = (-J_zJ_w u -2(z,w)u,J_zv)\\
&=  -( J_zJ_w u ,J_zv)-2(z,w)(u,J_zv) =   ( J_w u ,J_zJ_zv)+2(z,w)(J_zu,v)\\
&=  - ( J_w u , v)+2(z,w)(J_zu,v)=  ( J_{-w +2(z,w)z}u,v)\\
&=  (  -w +2(z,w)z,[u,v])= (-r_z(w),[u,v])\\
&= (w ,-r_z([u,v])),
\end{align*}
so that
$$-r_z([u,v])= [J_zu,J_zv].$$
The Lie group  generated by the $-r_z$ has finite index in
  $O(\mathfrak{z})$.
  \end{proof}

\begin{corollary}\label{corollary2.3} Let  $\mathfrak{n}$ be a fat algebra with center of dimension $m$. Then
$$\dim(K/K_0) \leq \dim (G/G_0) \leq m(m-1)/2 $$
with equality achieved for any type $H$ algebra of the same dimension with center of the same dimension.
\end{corollary}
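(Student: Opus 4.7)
The plan is to unwind the corollary by reading off bounds from the structure already established in Theorem~\ref{th2.1} and Proposition~\ref{theorem2.2}, and then verifying that the same construction used in Proposition~\ref{theorem2.2} actually lands in $K$, so that equality propagates down the chain in the $H$-type case.

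First, I would establish the chain of inclusions $K/K_0 \hookrightarrow G/G_0 \hookrightarrow O(\mathfrak{z})$. The commutative square in the diagram preceding Proposition~\ref{theorem2.2} shows that $K \cap G_0 = K_0$ (both are the stabilizer of the center inside their respective ambient groups), hence the natural map $K/K_0 \to G/G_0$ is injective; since $K$ is closed in $G$ the image is a closed Lie subgroup, giving $\dim(K/K_0) \leq \dim(G/G_0)$. For the second inclusion, Theorem~\ref{th2.1} provides a positive definite metric on $\mathfrak{z}$ invariant under $G(\mathfrak{n})$, which identifies $G/G_0$ with a closed subgroup of $O(\mathfrak{z}) \cong O(m)$. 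Hence $\dim(G/G_0) \leq \dim O(m) = m(m-1)/2$, completing the inequalities.

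For the equality clause, fix a type $H$ algebra $\mathfrak{n}'$ with $\dim \mathfrak{v}' = n$ and $\dim \mathfrak{z}' = m$ (which exists by Adams' theorem, as noted in the introduction), equipped with its natural $H$-type metric. Proposition~\ref{theorem2.2} shows that the Lie group generated by the reflections $-r_z$ has finite index in $O(\mathfrak{z}')$, so already at the Lie algebra level $\mathfrak{g/g_0} \cong \mathfrak{so}(m)$, which forces $\dim(G/G_0) = m(m-1)/2$.

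It remains to propagate this to $K/K_0$. The elements produced in Proposition~\ref{theorem2.2} are the block matrices $g_{(J_z, -r_z)}$, and I claim these in fact lie in $K$ for the $H$-type metric. Indeed, $-r_z \in O(\mathfrak{z})$ by construction, while $J_z$ is skew-symmetric with $J_z^2 = -I$ with respect to the chosen inner product on $\mathfrak{v}$, so $J_z \in O(\mathfrak{v})$; since $\mathfrak{n}$ non-singular forces $n = \dim\mathfrak{v}$ even and any such $J_z$ splits $\mathfrak{v}$ into two-dimensional $J_z$-invariant subspaces on each of which $J_z$ acts as a rotation of determinant $+1$, we in fact have $J_z \in SO(\mathfrak{v})$. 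Therefore $g_{(J_z,-r_z)} \in K$, and the same finite-index argument gives $\mathfrak{k/k_0} \cong \mathfrak{so}(m)$, whence $\dim(K/K_0) = m(m-1)/2$. The only subtle point is this last orientation check, but it is immediate from the decomposition of a skew involution into $2$-planes; there is no deeper obstacle.
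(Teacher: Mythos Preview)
Your proposal is correct and follows exactly the route the paper intends: the inequalities are immediate from Theorem~\ref{th2.1} (which places $G/G_0$ inside $O(m)$) together with the inclusion $K/K_0\hookrightarrow G/G_0$, and the equality clause is read off from Proposition~\ref{theorem2.2}. The paper leaves all of this implicit, stating the corollary without proof; your only addition is the explicit check that the generators $g_{(J_z,-r_z)}$ already lie in $K$ (via $J_z\in SO(\mathfrak{v})$), which is the natural way to push the equality down to $K/K_0$ and is straightforward.
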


\

Since $\Aut(\mathfrak{n})/Aut_0(\mathfrak{n}) = (G/G_0) \times$(dilations), one obtains

\

\begin{corollary} Let  $\mathfrak{n}$ be a fat algebra with center of dimension $m$. Then
$$ \dim(\Aut(\mathfrak{n})/Aut_0(\mathfrak{n})) \leq 1+ m(m-1)/2,$$
with equality achieved for any type $H$ algebra of the same dimension and with center of the same dimension.
\end{corollary}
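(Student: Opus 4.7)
The plan is to read the bound as a one-line dimension count built on the factorization
\[
\Aut(\mathfrak{n})/\Aut_0(\mathfrak{n}) \cong (G/G_0) \times (\text{dilations})
\]
already noted in the remark preceding the statement. Recall from the beginning of Section 2 that $\Aut(\mathfrak{n}) = G(\mathfrak{n}) \ltimes \mathfrak{D}(\mathfrak{n})$, where $\mathfrak{D}(\mathfrak{n})$ contains the one-parameter family of dilations $(t,t^2)$ together with the translation subgroup $\Hom(\mathfrak{v},\mathfrak{z})$. The latter sits inside $\Aut_0(\mathfrak{n})$ together with $G_0$, so passing to the quotient kills precisely those two pieces and leaves behind the product of $G/G_0$ with the 1-dimensional dilation group.

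With that identification in hand, the proof becomes purely arithmetic. Taking dimensions gives
\[
\dim(\Aut(\mathfrak{n})/\Aut_0(\mathfrak{n})) = \dim(G/G_0) + 1,
\]
and Corollary 2.3 supplies $\dim(G/G_0)\leq m(m-1)/2$; combining the two yields the asserted inequality. For the equality clause, Corollary 2.3 already guarantees $\dim(G/G_0) = m(m-1)/2$ for any type $H$ algebra with center of dimension $m$ (this being the content of Proposition 2.2), so adding the trivial $1$ from the dilation factor saturates the bound.

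The only point that merits a moment's scrutiny is that $\Aut(\mathfrak{n})/\Aut_0(\mathfrak{n})$ is literally a direct product rather than a genuine semidirect one, so that dimensions add with no hidden correction. This is immediate: the dilations $\mathrm{diag}(tI_n, t^2 I_m)$ commute with every graded matrix $\mathrm{diag}(a,b)$, and the translations $\Hom(\mathfrak{v},\mathfrak{z})$ have already been absorbed into $\Aut_0(\mathfrak{n})$. Beyond checking this commutation, there is no real obstacle here: the present statement is essentially a bookkeeping consequence of Corollary 2.3, with all the geometric content concentrated upstream in Theorem 2.1 and Proposition 2.2.
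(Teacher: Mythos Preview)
Your proposal is correct and follows exactly the paper's own argument: the paper states the factorization $\Aut(\mathfrak{n})/\Aut_0(\mathfrak{n}) = (G/G_0)\times(\text{dilations})$ immediately before the corollary and then simply says ``one obtains'', so your write-up is a faithful expansion of that one-line deduction from Corollary~2.3. Your extra paragraph verifying that the product is genuinely direct and that the translations are absorbed into $\Aut_0(\mathfrak{n})$ is a reasonable sanity check but not something the paper spells out.
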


\

A converse for Corollary \ref{corollary2.3} is

\

\begin{theorem}   If $\mathfrak{n}$ is fat with center of dimension $m$ and
$$\dim( K/K_0 )= m(m-1)/2$$
for some metric on $\mathfrak{v}$, then $\mathfrak{n}$
is of type $H$.
\end{theorem}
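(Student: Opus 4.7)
The plan is to invert the construction of Proposition~\ref{theorem2.2}: upgrade the dimension hypothesis to a Lie-algebra splitting of the exact sequence, then use the splitting together with fatness to force the Canonical Anticommutation Relations.

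First I would extract the splitting. The projection $\pi\colon\mathfrak{k}\to\mathfrak{so}(m)$, $(A,B)\mapsto B$, has image of dimension $\dim(K/K_0)=m(m-1)/2=\dim\mathfrak{so}(m)$, hence is surjective. For $m\ge 3$, $\mathfrak{so}(m)$ is semisimple, so $\mathrm{rad}(\mathfrak{k})\subset\mathfrak{k}_0$ and Levi's theorem produces a Lie-subalgebra $\mathfrak{s}'\subset\mathfrak{k}$ mapping isomorphically onto $\mathfrak{so}(m)$; the cases $m\le 2$ are elementary. In every case one obtains a Lie algebra homomorphism $\rho\colon\mathfrak{so}(m)\to\mathfrak{so}(\mathfrak{v})$ with
$$
[\rho(B),T_z]=T_{Bz},\qquad B\in\mathfrak{so}(m),\ z\in\mathfrak{z},
$$
by differentiating the automorphism relation $b[u,v]=[au,av]$. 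Integrating, $SO(m)$ (or its double cover) acts orthogonally on $\mathfrak{v}$ and $z\mapsto T_z$ is equivariant; since $SO(m)$ is transitive on the unit sphere of $\mathfrak{z}$, all $T_z$ with $|z|=1$ are $SO(\mathfrak{v})$-conjugate, hence share a common spectrum.

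I would then analyze the $\mathfrak{so}(m)$-equivariant symmetric bilinear map
$$
\Phi\colon S^2\mathfrak{z}\longrightarrow\mathrm{Sym}(\mathfrak{v}),\qquad \Phi(z,w)=T_zT_w+T_wT_z,
$$
decomposing $S^2\mathfrak{z}=\mathbb{R}\cdot I_\mathfrak{z}\oplus S^2_0\mathfrak{z}$. The trivial summand produces an $\mathfrak{so}(m)$-invariant positive symmetric operator $C$ on $\mathfrak{v}$ (positive by fatness), yielding $\Phi(z,w)=-\tfrac{2(z,w)}{m}C+\Phi_0(z,w)$ with $\Phi_0\colon S^2_0\mathfrak{z}\to\mathrm{Sym}_0(\mathfrak{v})$ equivariant. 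If $\Phi_0=0$ then $T_z^2=-\tfrac{|z|^2}{m}C$ for every $z$, so $[C,T_z]=0$; the eigenspaces $V_1,\ldots,V_r$ of $C$ are then $T_z$-invariant and pairwise orthogonal, and one checks $[V_i,V_j]=0$ for $i\neq j$ using orthogonality and the defining relation $(z,[u,v])=(T_zu,v)$. Rescaling the given metric on $\mathfrak{v}$ by a suitable scalar on each $V_i$ then produces a new metric for which $T_z^2=-|z|^2 I$, which is the type-$H$ condition.

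The main obstacle is therefore the vanishing $\Phi_0=0$. For $m\ge 3$, $S^2_0\mathfrak{z}$ is a non-trivial irreducible $\mathfrak{so}(m)$-module, so by Schur's lemma $\Phi_0$ is determined up to scalars on each isotypic copy of $S^2_0\mathfrak{z}$ appearing in $\mathrm{Sym}_0(\mathfrak{v})$. The strategy is to rule out those copies by combining the $z$-independence of the spectrum of $T_z$ (from the orbit argument) with the fatness requirement that $T_z$ be invertible for every $z\neq 0$, analyzing the isotypic decomposition of $\mathfrak{v}$ under $\rho$. Morally, this step is the converse to Proposition~\ref{theorem2.2}: the standard representation of $\mathfrak{so}(m)$ embeds equivariantly into $\mathfrak{so}(\mathfrak{v})$ as a family of everywhere-invertible skew operators only when $\mathfrak{v}$ is a direct sum of spin modules and $T$ is Clifford multiplication, which is exactly the type-$H$ structure.
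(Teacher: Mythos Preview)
Your proposal sets up the right framework—the surjection $\mathfrak{k}\twoheadrightarrow\mathfrak{so}(m)$, the splitting, and the resulting transitivity of $K/K_0$ on the unit sphere of $\mathfrak{z}$—but it does not close. You explicitly flag the vanishing of $\Phi_0$ as ``the main obstacle'' and then offer only a heuristic in place of a proof. Schur's lemma does tell you that $\Phi_0$ is governed by finitely many scalars (one for each copy of $S^2_0\mathfrak{z}$ in $\mathrm{Sym}_0(\mathfrak{v})$), but nothing in your argument forces those scalars to vanish; the constant-spectrum observation and fatness are invoked without being turned into an actual computation, and the final ``morally'' sentence about spin modules is not an argument. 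As written the proof is incomplete precisely at its crux.

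The paper's argument is much shorter and sidesteps this difficulty entirely. Once $K/K_0$ is known to act transitively on $\{|z|=1\}$, the relation $T_{bz}^2=aT_z^2a^{-1}$ for $(a,b)\in K$ puts all the $T_z^2$ (for unit $z$) in a single orthogonal conjugacy class. Rather than trying to prove that $T_z^2$ is \emph{constant} in $z$ for the given metric (your $\Phi_0=0$), the paper simply changes the metric on $\mathfrak{v}$ so that $T_{z_0}^2=-I$ at one chosen $z_0$; this uses only that $T_{z_0}$ is skew and invertible. Since every $T_z^2$ is an orthogonal conjugate of $T_{z_0}^2=-I$, one gets $T_z^2=-I$ for all unit $z$ at once. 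Your elaborate representation-theoretic detour through $S^2\mathfrak{z}$, isotypic components, and eigenspace rescalings is thus replaced by a one-line normalization: the freedom to adjust the metric absorbs exactly the ambiguity you are trying to kill with $\Phi_0$.
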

\begin{proof}
The hypothesis implies  that $ \mathfrak{k/k_0}=  \mathfrak{g/g_0}\cong \mathfrak{so}(m)$, so that
 $K/K_0$ acts transitively among the $|z|=1$. For $\begin{pmatrix} a & 0\\  0 & b \end{pmatrix}$ in this group,
$-T_{bz} = aT_{z}a^{-1}$, hence $T_{bz}^2 = aT_{z}^2a^{-1}$. Since $T_z$ is invertible, we can choose the metric such that $T_{z_0}^2=-I$ for any given $z_0$. Therefore $T_z^2=-I$ for all $|z|=1$, which implies the assertion.
\end{proof}

\

\

Maximal dimension means there are isomorphisms
$$\Lie(K/K_0) = \Lie(G/G_0) \cong \mathfrak{so} (m).$$
Therefore the simply connected covers are isomorphic:
 $Spin(m) \cong \widetilde{(G/G_0)_e}.$
The induced homomorphism
$$Spin(m) \rightarrow (G/G_0)_e$$
may or may not extend  to a homomorphism
$$Pin(m) \rightarrow   G/G_0 .$$
If it does extend, it may or may not be injective, in which case it is an isomorphism.
 Therefore, among the  algebras for which $\dim(G/G_0)$ is maximal, those for which
 $Pin(m) \cong G/G_0$
can be regarded as the most symmetric.

\begin{theorem}\label{thcliff} Suppose $\mathfrak{n}$ is a 2-step graded algebra such that $\Aut(\mathfrak{n})$ contains a copy of $Pin(m)$ inducing the standard action on $\mathfrak{z}$. Then
$\mathfrak{n}$ is type $H$.
\end{theorem}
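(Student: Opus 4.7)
The plan is to reduce the statement to the preceding theorem, which asserts that $\dim(K/K_0)=m(m-1)/2$ for some metric on $\mathfrak{v}$ already forces $\mathfrak{n}$ to be of type $H$. Thus the task is to exhibit a metric $g$ on $\mathfrak{v}$ for which the image of $K(\mathfrak{n},g)$ in $O(m)$ is maximal, and this is where the standardness of the $Pin(m)$-action on $\mathfrak{z}$ enters, since it guarantees that the embedded $Pin(m)$ projects onto a full copy of $O(m)$.

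My first move is to exploit compactness. Averaging any positive-definite inner product on $\mathfrak{n}$ over the compact image of $Pin(m)$ in $\Aut(\mathfrak{n})$ produces a $Pin(m)$-invariant metric on all of $\mathfrak{n}$. Because the center $\mathfrak{z}$ is characteristic and hence preserved by every automorphism, its orthogonal complement with respect to this invariant metric is also $Pin(m)$-stable. Declaring $\mathfrak{v}$ to be that complement yields a graded decomposition $\mathfrak{n}=\mathfrak{v}\oplus\mathfrak{z}$ relative to which each element of $Pin(m)$ acts as a block-diagonal matrix $\begin{pmatrix} a & 0 \\ 0 & b \end{pmatrix}$ with $a\in O(\mathfrak{v})$ and $b\in O(\mathfrak{z})$. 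I then take $g$ to be the restriction of this invariant metric to $\mathfrak{v}$.

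Next I pass to the identity component $Spin(m)\subset Pin(m)$: being connected, its image in $O(\mathfrak{v})$ lies in $SO(\mathfrak{v},g)$, so $Spin(m)$ embeds into $K(\mathfrak{n},g)$. By hypothesis, the composition $Spin(m)\hookrightarrow K\to O(m)$ coincides with the standard double cover $Spin(m)\to SO(m)$, whose differential is a surjection onto $\mathfrak{so}(m)$. Consequently $\dim(K/K_0)\geq m(m-1)/2$, and Corollary \ref{corollary2.3} forces equality. The preceding theorem then concludes that $\mathfrak{n}$ is of type $H$.

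The step requiring the most care is the construction of the graded decomposition: one must verify that the averaged metric genuinely yields a splitting compatible with the definition of $K$, which demands $a\in SO(\mathfrak{v})$ rather than just $O(\mathfrak{v})$. The only potential issue is a sign of $\det a$ on elements of $Pin(m)\setminus Spin(m)$, and this evaporates upon restricting to the identity component, because $Spin(m)$ alone already delivers the full $\mathfrak{so}(m)$ at the Lie-algebra level. Everything else is a routine dimension count feeding into the preceding theorem.
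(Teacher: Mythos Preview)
Your argument is correct and is genuinely different from the paper's. You reduce Theorem~\ref{thcliff} to the preceding theorem by averaging a metric over the compact image of $Pin(m)$, taking $\mathfrak v$ to be the orthogonal complement of $\mathfrak z$, and then observing that the connected subgroup $Spin(m)$ already lands in $K(\mathfrak n,g)$ and surjects onto $\mathfrak{so}(m)$ at the Lie-algebra level; the dimension count plus Corollary~\ref{corollary2.3} then feeds directly into the earlier characterization of type~$H$. In particular you use nothing about $Pin(m)$ beyond its compactness and the fact that its identity component covers $SO(m)$, so your proof actually yields the slightly stronger statement with $Spin(m)$ in place of $Pin(m)$.

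The paper proceeds quite differently: it exploits the concrete Clifford description of $Pin(m)$ as generated by unit vectors $z\mapsto J_z$ with $J_z^2=-I$, acting on $\mathfrak z$ as reflections. After fixing a $Pin(m)$-invariant metric on $\mathfrak v$, it derives the commutation relations $T_zJ_z=J_zT_z$ and $T_zJ_w=-J_wT_z$ for $w\perp z$ from the automorphism identity $T_{r_x(z)}=J_xT_zJ_x$, and then uses an explicit rotation $R_w(t)=r_zr_{w(t)}$ to compute that $T_{R_w(t)z}^2=T_z^2$, whence $T_z^2$ is constant on the unit sphere and a rescaling finishes. Thus the paper's argument is self-contained, bypassing the preceding theorem entirely, and extracts the type-$H$ structure directly from the interplay between the Clifford operators $J_z$ and the bracket operators $T_z$. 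Your route is cleaner and more modular; the paper's route is more hands-on and does not lean on the somewhat delicate metric-change step inside the proof of the preceding theorem.
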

\begin{proof}
The assumption implies that there is a linear map $\mathfrak{z}\rightarrow \End(\mathfrak{v})$, denoted by $z\mapsto J_z$ such that
 $J_z^2 = - |z|^2I$ for all $z$ and
 $$[J_zu,J_zv]= r_z([u,v])$$
 for $u,v\in \mathfrak{v}$, $z\in \mathfrak{z}$, $|z|=1$, where $r_z$ is the reflection in $\mathfrak{z}$ with respect of the line spanned by $z$. $Pin(m)$ is the group generated by the $J_z$'s with $\|z\|=1$, which acts linearly on $\mathfrak{v}$ and is compact. Fix a metric on $\mathfrak{v}$ invariant under it.

We get, as in the proof of Theorem \ref{th2.1},  that if $\begin{pmatrix} a & 0\\  0 & b \end{pmatrix}\in \Aut(\mathfrak{n})$, then
$$T_{ b^\T z}=a^\T T_za.$$
In particular:
$$T_{  r_x(z)}=J_x T_z J_x.$$
If $x=z$, we get $T_z = -J_zT_zJ_z$, thus $T_zJ_z = -J_z^{-1}T_z = J_zT_z$. If $x\perp z$, we get  $T_z = J_xT_zJ_x$, thus  $T_zJ_x = J_x^{-1}T_z = -J_xT_z$. It follows that $T_z^2$ commutes with $J_z$ and with $J_w$, $w\perp z$.

Now, let $z\in\mathfrak{z}$ and $w\perp z$. Let $R_w(t)$ the $2t$-rotation from $z$ towards $w$. Then $R_w(t) =  r_zr_{w(t)}$, with $w(t) =  \cos(t)z + \sin(t)w$. It follows that
$$
\begin{pmatrix} J_zJ_{w(t)} & 0\\  0 & R_w(t) \end{pmatrix}
$$
is an orthogonal automorphism and, therefore, satisfies
$$
T_{R_w(t)z} = (J_zJ_{w(t)})^\T T_z (J_zJ_{w(t)}).
$$
Since $(J_zJ_{w(t)})^\T = (J_zJ_{w(t)})^{-1}$,
$$
T_{R_w(t)z}^2 = (J_zJ_{w(t)})^\T T_{z}^2 (J_zJ_{w(t)}) =    J_{w(t)}J_zT_{z}^2J_zJ_{w(t)}.
$$
Since $T_z^2$ commutes with $J_z$ and $J_w$,
\begin{equation}\label{t2}
T_{R_w(t)z}^2 =  T_{z}^2J_{w(t)}J_zJ_zJ_{w(t)} =  -T_{z}^2J_{w(t)}J_{w(t)}.
\end{equation}
But $J_{w(t)}^2 = -I$, so that
(\ref{t2}) implies that
$$
T_{R_w(t)z}^2 = T_{z}^2.
$$
For all $z' \in  \mathfrak{z}$ we can choose $w\in \mathfrak{z},t \in \mathbb R$ such that $R_w(t)z = z'$, so we get
$$
T_{z'}^2 = T_{z}^2, \quad \text{ for all } z' \in \mathfrak{z}, |z'| =1.
$$
The anti-symmetry of the bracket implies that $T_z$ is skew-symmetric. Rescaling the scalar product on $\mathfrak{v}$ we obtain
that $T_{z}^2 = -I$, so $T_{z'}^2 = -I$  for all  $z' \in \mathfrak{z}$, $|z'| =1$. Therefore $\mathfrak{n}$ is type $H$.

\end{proof}

\end{section}

\begin{section}{The case of center of dimension 2}\label{section3}

In this section we compute the groups $G,G_0, G/G_0$ in the case $m=2$. The various types are parametrized by pairs
$$(\mathbf{c},\mathbf{r})
\in  (\mathbb{U}^\ell /SL(2,\mathbb{R}))
 \times  \mathbb{Z}_+^\ell $$
  where  $\mathbb{U}$ is the upper-half plane and $2\ell = 2\sum r_j
 = \dim \mathfrak{n}-2.
$
As a corollary we conclude that $\Aut(\mathfrak{n})$  is maximal if and only if $\mathfrak{n}$ is  type $H$. These are
complex Heisenberg algebras of various dimensions regarded as real Lie algebras.

First we recall the normal form for fat algebras with $m=2$ deduced from \cite{LT}. Given  $c= a + bi\in \mathbb{C} $, let
$$
Z(c)= \begin{pmatrix} a &b \\ -b &a \end{pmatrix}.
$$
If $r\in \mathbb{Z}_+$, set
$$
A(c,r) = \begin{pmatrix} Z(c) & & & \\ I_2  & Z(c) & & \\& & \ddots &  \\ & & I_2  & Z(c)\end{pmatrix}
$$
a $2r\times 2r$-matrix. If $\mathbf{c}=(c_1,...,c_\ell)\in \mathbb{C}^\ell$ and $\mathbf{r}=(r_1,...,r_\ell)\in \mathbb{N}_+^\ell$, set
$$
A(\mathbf{c},\mathbf{r})  = \begin{pmatrix} A(c_1,r_1)& & & \\   & A(c_2,r_2) & & \\& & \ddots &  \\ & &   & A(c_\ell,r_\ell) \end{pmatrix}
$$
which is a $2s\times 2s$ matrix, $s=r_1+...+r_\ell$.

Let now $\phi, \psi_{(\mathbf{c},\mathbf{r})}$ be the 2-forms on $\mathbb{R}^{4s}$ whose matrices in the standard basis are
\begin{equation}\label{psi} [\phi]=\begin{pmatrix} 0_{  }&- I_{ 2s } \\I_{2s  } & 0_{  } \end{pmatrix}\qquad
[\psi_{(\mathbf{c},\mathbf{r})}]= \begin{pmatrix} 0_{ }& A(\mathbf{c},\mathbf{r})  \\-A^\T(\mathbf{c},\mathbf{r}) & 0_{ } \end{pmatrix}.
\end{equation}
Then
$$
[u,v]_{(\mathbf{c},\mathbf{r})} = (\phi (u,v),\psi_{(\mathbf{c},\mathbf{r})} (u,v))=<u,[\phi]v>e_1 + <u,[\psi_{(\mathbf{c},\mathbf{r})}]v>e_2
$$
is an $\mathbb{R}^{2}$-valued antisymmetric 2-form on $\mathbb{R}^{4s}$. Let
$$
\mathfrak{n}_{(\mathbf{c},\mathbf{r})} = \mathbb{R}^{4s}\oplus \mathbb{R}^2
$$
be the corresponding Lie algebra.

Define $M_{(\mathbf{c},\mathbf{r})}\in\End(\mathfrak{v})$ by
$$
\phi(M_{(\mathbf{c},\mathbf{r})}u,v) = \psi_{(\mathbf{c},\mathbf{r})}(u,v),
$$
whose matrix is
$$
[M_{(\mathbf{c},\mathbf{r})}] = \begin{pmatrix} -A_{(\mathbf{c},\mathbf{r})}^\T& 0  \\ 0 & -A_{(\mathbf{c},\mathbf{r})}\end{pmatrix}.
$$
then we have
\begin{equation}\label{corchete2}
[u,v]_{(\mathbf{c},\mathbf{r})} = \phi (u,v)e_1+\phi(M_{(\mathbf{c},\mathbf{r})}u,v)e_2 ,\text{ for } u,v \in \mathbb R^{4s}.
\end{equation}
One can deduce [LT]

\begin{proposition}\label{theorem3.1}

\

(a) Every fat algebra with center of dimension 2 is isomorphic to some  $\mathfrak{n}_{(\mathbf{c},\mathbf{r})}$ with $\mathbf{c} \in \mathbb{U}^\ell$.

(b) Two of these are isomorphic if and only if the $\textbf{r}$'s coincide up to permutations and the $\textbf{c}$'s differ by  some  M\"obius transformation acting componentwise.

(c) $\mathfrak{n}_{(\mathbf{c},\mathbf{r})}$ is of type $H$ if and only if $\mathbf{c}=(c,\ldots,c)$ and $\mathbf{r}=(1,\ldots,1)$
\end{proposition}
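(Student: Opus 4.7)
The plan is to reduce (a)--(c) to a problem in symplectic linear algebra. Fix a basis $e_1,e_2$ of $\mathfrak{z}$ and write the bracket as a pair $(\phi,\psi)$ of antisymmetric bilinear forms on $\mathfrak{v}$; fatness of $\mathfrak{n}$ translates directly into the condition that every nontrivial real linear combination $\lambda\phi+\mu\psi$ be non-degenerate.

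For (a), first use Darboux to bring $\phi$ into the standard symplectic form displayed in (\ref{psi}). Define $M\in\End(\mathfrak{v})$ by $\phi(Mu,v)=\psi(u,v)$; anti-symmetry of $\psi$ forces $M$ to be $\phi$-self-adjoint, i.e.\ $\phi(Mu,v)=\phi(u,Mv)$, and the fatness condition becomes the requirement that $M$ have no real eigenvalues. I would then invoke the real Jordan--Kronecker normal form for $\phi$-self-adjoint endomorphisms without real spectrum, which decomposes $\mathfrak{v}$ into $\phi$-orthogonal Lagrangian pairs of size $2r_j$, one for each conjugate pair $(c_j,\bar c_j)$ of generalized eigenvalues with $c_j\in\mathbb{U}$. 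A symplectic basis adapted to this decomposition brings $M$ to the block form $\operatorname{diag}(-A^\T,-A)$ with $A=A(\mathbf{c},\mathbf{r})$, and (\ref{corchete2}) then identifies $\mathfrak{n}$ with $\mathfrak{n}_{(\mathbf{c},\mathbf{r})}$. The main obstacle lies precisely here: matching the abstract normal form with the explicit block $A(\mathbf{c},\mathbf{r})$ from (\ref{psi}) requires careful bookkeeping inside each generalized eigenspace, which is the content of the classification in \cite{LT}.

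For (b), an isomorphism is a pair $(a,b)\in GL(\mathfrak{v})\times GL(\mathfrak{z})$ intertwining the brackets. After Darboux normalization the $GL(\mathfrak{v})$-freedom becomes conjugation of $M$ by $Sp(\phi)$, whose orbit invariants are precisely the unordered list $\{(c_j,r_j)\}$. The residual $GL(\mathfrak{z})$-action sends the pair $(\phi,\psi)$ to a new pair $(\alpha\phi+\gamma\psi,\beta\phi+\delta\psi)$; re-normalizing the first entry back to Darboux form replaces $M$ by the M\"obius transform $(\alpha+\gamma M)^{-1}(\beta+\delta M)$, and the requirement that the new spectrum remain in $\mathbb{U}$ restricts the acting group to $SL(2,\mathbb{R})/\{\pm I\}$, acting componentwise on $\mathbf{c}$.

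For (c), if $\mathfrak{n}$ is of type $H$ there is a metric on $\mathfrak{v}$ making $T_z^2=-|z|^2I$ for every $z\in\mathfrak{z}$. After rescaling so that $|e_1|=|e_2|=1$ and setting $M=T_{e_1}^{-1}T_{e_2}$, the Clifford anticommutation relations $T_{e_1}T_{e_2}+T_{e_2}T_{e_1}=0$ together with $T_{e_i}^2=-I$ force the minimal polynomial of $M$ to divide $X^2+1$, so $M$ is semisimple with spectrum $\{\pm i\}$; this forces $\mathbf{r}=(1,\ldots,1)$ and, after the equivalence of (b), $\mathbf{c}=(i,\ldots,i)$. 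The converse is a direct verification from (\ref{psi}) that the matrix $[M_{(\mathbf{c},\mathbf{r})}]$ with $\mathbf{c}$ constant and $\mathbf{r}=(1,\ldots,1)$ squares to a scalar multiple of $-I$, producing the Clifford structure of an $H$-algebra.
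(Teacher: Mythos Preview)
Your sketch is sound and matches the paper's framework. Note, however, that the paper itself offers no proof of this proposition: it is stated with the prefatory ``One can deduce \cite{LT}'' and all details are delegated to that reference. Your reduction of (a) to the symplectic normal form of the $\phi$-self-adjoint operator $M$ defined by $\phi(Mu,v)=\psi(u,v)$ (invertible with no real spectrum, by fatness), your identification in (b) of the residual $GL(\mathfrak z)$-freedom with a M\"obius action on the spectrum of $M$, and your reading of the type-$H$ condition in (c) as $M^{2}=-I$ are precisely the objects the paper introduces in (\ref{psi})--(\ref{corchete2}) and exploits throughout Section~\ref{section3}; so there is no independent argument in the paper to compare against, and your outline is the expected reconstruction of \cite{LT}.

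One small slip in the converse of (c): for a general constant $c\in\mathbb U$ the matrix $[M_{((c,\dots,c),(1,\dots,1))}]$ does \emph{not} square to a scalar multiple of $-I$, since $Z(c)^{2}=Z(c^{2})$ is scalar only when $c$ is purely imaginary. The fix is the move you already made in the forward direction: first use (b) to pass to $c=i$, and then verify directly that $[\phi]$ and $[\psi_{((i,\dots,i),(1,\dots,1))}]$ square to $-I$ and anticommute, giving the Clifford structure.
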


Let now
$$\mathfrak{n} = \mathfrak{n}_{(\mathbf{c},\mathbf{r})}$$
 be fat and let $G=G(\mathfrak{n})$, etc.
We denote $\hat{\mathfrak{n}} $  the algebra obtained by replacing the matrices $A(c,r)$ by their semisimple parts and setting all $c_j=\sqrt{-1}$. The resulting $\hat A(c,r)$  consists of blocks $\begin{pmatrix} 0 & 1 \\-1 & 0  \end{pmatrix}$ along the diagonal and  $\hat{\mathfrak{n}} $  is  isomorphic to the $H$-type algebra ${\mathfrak{n}}_{((i,\ldots,i),(1,\ldots,1))}$. The correspondence
$$\mathfrak{n}\mapsto \mathfrak{\hat n}$$
is functorial and seems extendable inductively to fat algebras of any dimension, although here we will maintain the assumption $m=2$.

\medbreak

\begin{lemma}\label{theorem3.2}
 $
G_0(\mathfrak{n}) \subset G_0(\mathfrak{\hat n})
$ and $\dim\Aut( \mathfrak{n})\leq \dim\Aut( \hat{\mathfrak{n}})$.
\end{lemma}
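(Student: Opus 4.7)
The plan is to reduce the dimension inequality to the containment $G_0(\mathfrak{n})\subset G_0(\hat{\mathfrak{n}})$, and then prove the containment by comparing the semisimple part of $M_{(\mathbf{c},\mathbf{r})}$ with $\hat M$. Formula (\ref{igualdad}) applies to both $\mathfrak{n}$ and $\hat{\mathfrak{n}}$ with the same $n$ and $m=2$, so the dimension inequality will follow from $\dim(G/G_0)_{\mathfrak{n}}\le\dim(G/G_0)_{\hat{\mathfrak{n}}}$ and $\dim G_0(\mathfrak{n})\le\dim G_0(\hat{\mathfrak{n}})$. The first is immediate from Corollary \ref{corollary2.3} combined with the fact that $\hat{\mathfrak{n}}$ is of type $H$ (so $\dim(G/G_0)_{\hat{\mathfrak{n}}}=1$), while the second is an obvious consequence of the containment.

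For the containment itself, the first step is to rewrite $G_0(\mathfrak{n})$ and $G_0(\hat{\mathfrak{n}})$ using (\ref{corchete2}): a matrix $\begin{pmatrix}a&0\\0&I_2\end{pmatrix}$ lies in $G_0(\mathfrak{n})$ precisely when $a$ preserves $\phi$ and commutes with $M=M_{(\mathbf{c},\mathbf{r})}$, and the analogous characterization with $\hat M$ in place of $M$ describes $G_0(\hat{\mathfrak{n}})$. Since preservation of $\phi$ is common to both conditions, the task reduces to the implication $aM=Ma\Rightarrow a\hat M=\hat M a$. By the Jordan--Chevalley decomposition over $\mathbb{R}$, the semisimple part $M_s$ is a real polynomial in $M$, so $aM=Ma$ automatically yields $aM_s=M_sa$; it therefore suffices to prove the centralizer inclusion $\operatorname{Cent}(M_s)\subset\operatorname{Cent}(\hat M)$.

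This inclusion is the only non-routine step and the main obstacle. Let $V_j\subset\mathfrak{v}$ denote the real isotypic component of $M_s$ associated with the conjugate pair $\{-c_j,-\bar c_j\}$; since the $c_j$'s are pairwise distinct in the upper half-plane, these eigenvalue pairs are disjoint and any element of $\operatorname{Cent}(M_s)$ preserves the decomposition $\mathfrak{v}=\bigoplus_j V_j$. Writing $c_j=a_j+ib_j$ with $b_j>0$ and using that $\hat A$ is a direct sum of $Z(i)$-blocks while $A_s$ is a direct sum of $Z(c_j)=a_jI+b_j\,Z(i)$-blocks, a short block calculation, identical on each of the two halves $V_j^{(1)},V_j^{(2)}$ of $V_j$, produces the affine identity
$$M_s|_{V_j}=-a_j\,I+b_j\,\hat M|_{V_j}.$$
Because $b_j\neq 0$, the $\operatorname{End}(V_j)$-centralizers of $M_s|_{V_j}$ and of $\hat M|_{V_j}$ coincide; assembling over $j$ gives $\operatorname{Cent}(M_s)\subset\operatorname{Cent}(\hat M)$, and the containment follows. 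The rest of the argument is routine Jordan theory and bookkeeping via (\ref{igualdad}).
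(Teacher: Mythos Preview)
Your argument follows the same route as the paper's: characterize $G_0$ via (\ref{corchete2}) as $\{g\in Sp(\phi):\,gM=Mg\}$, pass from $M$ to its semisimple part $M_s$ (a polynomial in $M$), show $\operatorname{Cent}(M_s)\subset\operatorname{Cent}(\hat M)$, and assemble the dimension inequality from Corollary~\ref{corollary2.3} and (\ref{igualdad}). Your affine identity $M_s|_{V_j}=-a_jI+b_j\,\hat M|_{V_j}$ makes explicit what the paper leaves as a one-line remark.

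One correction: the $c_j$ are \emph{not} assumed pairwise distinct (in the type-$H$ case they all coincide), so your sentence ``since the $c_j$'s are pairwise distinct'' is false in general. The fix is immediate: let $j$ range over the \emph{distinct values} occurring among the $c_j$ and take $V_j$ to be the full isotypic component of $M_s$ for that value. Your affine identity then still holds on each such $V_j$ (the scalars $a_j,b_j$ are well defined there), elements of $\operatorname{Cent}(M_s)$ still preserve the decomposition $\mathfrak{v}=\bigoplus_j V_j$ because the eigenvalue pairs are now genuinely disjoint, and the rest of the argument goes through unchanged.
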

\begin{proof}
Let $\phi$, $\psi$, $M_{(\mathbf{c},\mathbf{r})}\in\End(\mathfrak{v})$ be as above, so that
$$
\phi(M_{(\mathbf{c},\mathbf{r})}u,v) = \psi_{(\mathbf{c},\mathbf{r})}(u,v).
$$
By formula (\ref{corchete2}), $g \in G_0( \mathfrak{n}_{(\mathbf{c},\mathbf{r})})$ if and only if
$$
\phi(u,v) =\phi (gu,gv),\qquad \phi(M_{(\mathbf{c},\mathbf{r})}u,v)=\phi(M_{(\mathbf{c},\mathbf{r})}gu,gv)= \phi(g^{-1}M_{(\mathbf{c},\mathbf{r})}gu,v),
$$
i.e., if and only if  $g\in Sp(\phi)$ and commutes with $M_{(\mathbf{c},\mathbf{r})}$. In particular it commutes with the semisimple part $\hat M_{(\mathbf{c},\mathbf{r})}$.
This is conjugate  to a matrix having blocks $ Z(c)= \begin{pmatrix} \Re(c) &\Im(c) \\ -\Im(c) &\Re(c) \end{pmatrix} $ for various $c\in \mathbb{C}$ along the diagonal, and zeros elsewhere. Every matrix commuting with such a matrix will surely commute with that having all $c=1$. It follows that
$g$ also preserves  $\phi(\hat M_{(\mathbf{c},\mathbf{r})}u,v)$ and, therefore, it is an automorphism of $\hat{\mathfrak{n}}$ as well. Thus,
$$
G_0( \mathfrak{n} )\subset G_0( \hat{\mathfrak{n}}).
$$
From Corollary \ref{corollary2.3},
$
\dim(G( \mathfrak{n})/G_0( \mathfrak{n}   )) \le  \dim(G( \hat{\mathfrak{n}}  )/ G_0( \hat{\mathfrak{n}} )),
$
and therefore

$
\dim G( \mathfrak{n} ) = \dim(G( \mathfrak{n} )/G_0( \mathfrak{n} )) +\dim G_0( \mathfrak{n} )  \le  \dim(G( \hat{\mathfrak{n}})/ G_0( \hat{\mathfrak{n}}))+ \dim G_0( \hat{\mathfrak{n}}   )   =  \dim G( \hat{\mathfrak{n}}).
$

Formula (\ref{igualdad}) implies
 $\dim \Aut(\mathfrak{n})\leq \dim \Aut(\mathfrak{\hat n})$, as claimed.
 \end{proof}

\renewcommand\b{\mathbf }

\

Next we will describe  $\mathfrak g_0(\mathfrak{n}_{(c,r)})$ for $c\in \mathbb{U}$ and $r\in  \mathbb{{N}}_+$, i.e., the case when the matrices $A$ consist of a single block.
Since $c$ is $SL(2,\C)-$conjugate to $i$, it is enough to take $c=i$. Define the  $2\times 2$-matrices
$$
\b 1 = \begin{pmatrix} 1&0\\0&1\end{pmatrix},\quad \b i = \begin{pmatrix} 0&-1\\1&0\end{pmatrix}, \quad \b x=\begin{pmatrix} 0&1\\1&0\end{pmatrix},\quad
\b y = \begin{pmatrix} -1&0\\0&1\end{pmatrix},
$$
and let $M_r(\mathbb R\langle \b 1,\b i\rangle)$ and  $M_r(\mathbb R\langle \b x,\b y\rangle)$ denote the real vector spaces of $r \times r$ matrices with coefficients in the span of $\b 1,\b i$ and   $ \b x,\b y $ respectively. Then the vector space
$$
\mathcal R(r) = \left\{\begin{pmatrix} A&B\\C&D\end{pmatrix}:\; A,D \in M_r(\mathbb R\langle \b 1,\b i\rangle), \;B,C \in M_r(\mathbb R\langle \b x,\b y\rangle) \right\},
$$
is a actually a matrix algebra.

Note that
$$
\b 1^{\tt t} =\b 1,\quad \b i^{\tt t} = -\b i,\quad \b x^{\tt t} = \b x,\quad \b y^{\tt t} =\b y.
$$
Letting
 $A^{\tt t}$ denote  the transpose or an $\mathbb R$-matrix and $A^t$, $A^*$ the transpose and conjugate transpose of $\mathbb R[\b i,\b x,\b y]$-matrices, one obtains
$$
A^{\tt t} = A^*
$$ for $A \in M_r(\mathbb R\langle \b 1,\b i\rangle)$ while
$$A^{\tt t} = A^t $$
 for $A \in M_r(\mathbb R\langle \b x,\b y\rangle)$.

With the notation
$$J_1 = [\phi]\qquad J_2 = [\psi_{((i,\ldots,i),(1,\ldots,1))}],$$
$$
\mathfrak g_0(\hat{\mathfrak n}) =  \left\{ X \in M_{4r}(\mathbb R): J_1X + X^{\tt t}J_1 = 0,   J_2X + X^{\tt t}J_2 = 0\right\}.
$$
From \cite{S} we know that
$$\mathfrak g_0(\hat{\mathfrak n})\cong \mathfrak{sp}(r,\mathbb C)^\R$$
Changing basis,
$$
\mathfrak g_0(\hat{\mathfrak n}) =  \left\{X \in \mathcal R(r): \; J_1X + X^{\tt t}J_1 = 0, \;  J_2X + X^{\tt t}J_2 = 0\right\}
$$
where
$$
J_1 = \begin{pmatrix} 0&I_r\\-I_r&0\end{pmatrix}, \quad
J_2 =  \begin{pmatrix} 0&\mathbf{i}I_r\\\mathbf{i}I_r&0\end{pmatrix}.
$$
This gives an alternative description of this algebra:
$$
\mathfrak g_0(\hat{\mathfrak n}) = \left\{\begin{pmatrix} A&B\\C&-A^*\end{pmatrix}: \ A \in M_r(\mathbb R\langle \b 1,\b i\rangle),\ B,C \in M_r(\mathbb R\langle \b x,\b y\rangle),\ B^t=B,\ C^t=C\right\}
$$

We now restrict our attention to  matrices  $\begin{pmatrix} A&B\\C&-A^*\end{pmatrix}$ in  $\mathfrak g_0(\hat{\mathfrak n})$  where $A,B,C$ have the respective forms
\begin{align*}
 \begin{pmatrix}  a_{1} & a_{2} &\cdots &a_{r} \\0  & \ddots &\ddots& \vdots \\\vdots &\ddots&\ddots& a_{2}\\0  & \cdots &0  &a_{1}\end{pmatrix}
 \qquad \begin{pmatrix}  b_{1} &\cdots& b_{r-1}  &b_{r} \\  \vdots& \iddots &\iddots& 0 \\b_{r-1} &\iddots&\iddots&\vdots\\b_{r}  & 0 &\cdots  &0\end{pmatrix}
\qquad \begin{pmatrix} 0 &\cdots& 0  &c_{1 } \\\vdots  & \iddots &\iddots& c_{2} \\0 &\iddots&\iddots& \vdots\\c_{1}  & c_{2} &\cdots  &c_{r}\end{pmatrix}
\end{align*}
with coefficients in $M_2(\R)$. Let $\b A_k=\begin{pmatrix} A&0\\0&-A^*\end{pmatrix}$ having $a_{k} =\b 1$ and zero otherwise and $\b A'_k$  the matrix of the same form but with $a_{k} =\b i$ and zeros elsewhere. Similarly, let $\b B_k$ (resp. $\b C_k$) the matrix
$\begin{pmatrix}0&B\\0&0\end{pmatrix}$ (resp., $\begin{pmatrix} 0&0\\C&0\end{pmatrix}$)
with $b_{k}$ (resp. $c_{k}$) equal to $\b x$  and zeros elsewhere, and $\b B'_k$ (resp. $\b C'_k$) with $b_{k}$ (resp. $c_{k}$) equal to $\b y$ and zeros elsewhere.

\begin{theorem} \label{th3.3}Let $\mathfrak{n} = \mathfrak{n}_{({c},{r})}$, $(c,r) \in \mathbb{\mathbb{U}}\times \mathbb N$, and regard
$\mathfrak g_0(\mathfrak n)$ as a subalgebra of $\mathfrak gl(\mathfrak v)$. Then,
\begin{enumerate}
\item $\mathfrak g_0(\mathfrak n)$ is the $\mathbb R$-span of $\b A_i,\b A'_i,\b B_i,\b B'_i,\b C_i,\b C'_i$ for $1 \le i \le r$.
\item The semisimple part of ${\mathfrak g}_0(\mathfrak n) $ is the span of $\b A_1,\b A'_1,\b B_1,\b B'_1,\b C_1,\b C'_1$.
\item The solvable radical is the span of $\b A_i,\b A'_i,\b B_i,\b B'_i,\b C_i,\b C'_i$ with $1 <i\le r$.
\end{enumerate}
In particular, the $\mathbb R$-dimension de ${\mathfrak g}_0(\mathfrak{n})$ is equal to $6r$ and the semisimple part of ${\mathfrak g}_0(\mathfrak n) $ is isomorphic to $\mathfrak{sp}(1,\mathbb C)$.

\end{theorem}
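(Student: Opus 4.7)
The plan is to use Lemma \ref{theorem3.2} as the key reduction. Since $c$ is $SL(2,\mathbb{R})$-conjugate to $i$, we may assume $c=i$; then $M_{(c,r)}$ admits the real Jordan decomposition $M_{(c,r)} = \hat M + N$ with $\hat M$ the semisimple part (defining $\hat{\mathfrak n}$) and $N$ the nilpotent part. Since $\hat M$ and $N$ commute, being polynomials in $M_{(c,r)}$, the condition $[X,M_{(c,r)}]=0$ is equivalent to $[X,\hat M]=0$ together with $[X,N]=0$, and combining this with Lemma \ref{theorem3.2} yields
\[
\mathfrak g_0(\mathfrak n) = \{X \in \mathfrak g_0(\hat{\mathfrak n}) : [X, N] = 0\}.
\]

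For part (1), I translate everything to the $\mathcal R(r)$ description of $\mathfrak g_0(\hat{\mathfrak n})$. A direct computation shows that in that basis $N$ is block-diagonal with shift matrices $N_u, N_d$ (having $\mathbf{1}$'s on one off-diagonal) as the blocks. The equation $[X, N]=0$ for $X = \begin{pmatrix} A & B \\ C & -A^* \end{pmatrix}$ then splits into $[A, N_u]=0$, $N_u B = B N_d$, and $N_d C = C N_u$. The first forces $A$ to be upper-triangular Toeplitz with entries in $\mathbb{R}\langle \mathbf{1}, \mathbf{i}\rangle$, yielding parameters $a_1,\dots,a_r$. The second, combined with $B^t = B$, forces $B$ to be constant on antidiagonals and supported in the upper-left triangle $i+j\le r+1$, giving the Hankel shape with parameters $b_1,\dots,b_r \in \mathbb{R}\langle \mathbf{x}, \mathbf{y}\rangle$; the analogous argument handles $C$. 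This produces $3r$ scalar parameters each in a two-dimensional real space, so $\dim_{\mathbb{R}} \mathfrak g_0(\mathfrak n) = 6r$, and the six families $\{\mathbf{A}_k, \mathbf{A}'_k, \mathbf{B}_k, \mathbf{B}'_k, \mathbf{C}_k, \mathbf{C}'_k\}_{k=1}^r$ form a basis.

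For parts (2) and (3), the strategy is to identify the Levi decomposition directly from this basis. The subspace $\mathfrak r$ spanned by the generators with $k>1$ has dimension $6(r-1)$; careful bookkeeping of how the Toeplitz and Hankel shapes multiply shows that $\mathfrak r$ is closed under brackets and that successive brackets shift entries further from the main diagonal (resp.\ antidiagonal), so that $\mathfrak r$ is nilpotent and hence the solvable radical of $\mathfrak g_0(\mathfrak n)$. On the six-dimensional quotient $\mathfrak g_0(\mathfrak n)/\mathfrak r$, the induced brackets among the classes of the $k=1$ generators reproduce the standard $\mathfrak{sp}(1,\mathbb{C})^{\mathbb{R}}$ relations, with $\mathbf{A}_1$ playing the role of the Cartan element and the $\mathbf{B},\mathbf{C}$ pairs as root vectors; this identifies the semisimple quotient with $\mathfrak{sp}(1,\mathbb{C})$. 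The main technical obstacle is the simultaneous verification that $\mathfrak r$ is an ideal closed under brackets and that the induced brackets on the quotient have the correct semisimple structure; both reduce to explicit multiplication tables for the Toeplitz/Hankel shapes derived in the previous paragraph.
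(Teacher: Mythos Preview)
Your approach is essentially the same as the paper's. The only cosmetic difference is that you phrase the extra condition as the commutator $[X,N]=0$ (using that the semisimple and nilpotent parts of $M_{(c,r)}$ are polynomials in it), whereas the paper writes it as the bilinear-form condition $N_2X+X^{\tt t}N_2=0$; unwinding either one yields the same three block equations on $A,B,C$ and hence the same Toeplitz/Hankel shapes. For parts (2) and (3) you actually supply more detail than the paper, which simply says they ``follow from (1) and the explicit presentation''; your argument via the nilpotent ideal $\mathfrak r$ and the identification of the quotient with $\mathfrak{sp}(1,\mathbb C)$ is the right way to make that precise.
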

\begin{proof} It is enough to consider the case $\mathfrak{n} = \mathfrak{n}_{({i},{r})}$.
Let $T_2 = [\psi_{(i,r)}]$ and write
$T_2 = J_2 + N_2$ where
$$
N_2 = \begin{pmatrix} 0&N\\-N^t&0\end{pmatrix}, \text{ with } N = \begin{pmatrix} 0 &\cdots &0 &0 \\ 1 & \ddots & 0& 0 \\ &\ddots&\ddots& \\ 0 & \cdots &1 &0\end{pmatrix}.
$$
From Lemma \ref{theorem3.2},
$
{\mathfrak g}_0(\mathfrak n) =  \left\{ X \in \mathfrak g_0(\hat{\mathfrak n}):\ T_2X + X^{\tt t}T_2 = 0\right\}
$. As $\mathfrak g_0(\mathfrak n) \subset \mathfrak g_0(\hat{\mathfrak n})$ one obtains
$$
{\mathfrak g}_0(\mathfrak n) =  \left\{ X \in \mathfrak g_0(\hat{\mathfrak n})  :N_2X + X^{\tt t}N_2 = 0\right\}.
$$
The conditions on $\begin{pmatrix} A&B\\C&-A^*\end{pmatrix} \in {\mathfrak g}_0(\mathfrak n)$ are, explicitly,
\begin{align}
0 &= NC -C^tN^t = NC - (NC)^t \label{(1)} \\
0 &= N^tA - AN^t  \label{(3)}  \\
0 &= N^tB - B^tN =  N^tB -( N^tB)^t. \label{(4)}
\end{align}

For the first equation, note that $NC$ symmetric if and only if  $c_{i,j+1} = c_{j,i+1}$ and $c_{1,j}=0$ for $i,j<n$. Since $C$ is symmetric,   $c_{i,j+1} = c_{j,i+1} =c_{i+1,j}$ and $c_{1,j}=0$ for $i,j<n$. We conclude:

If $i+j =k \le r$, $c_{i,j} = c_{i,k-i} = c_{i-1,k-i +1} = c_{i-2,k-i+2}\cdots =c_{1,k-1} =0$

If $i+j =k > r$,  $c_{i,j} = c_{i,k-i} = c_{i+1,k-i -1} =  c_{i+2,k-i-2}\cdots =c_{r,k-i+i-r} = c_{r,k-r}$

Thus, the strict upper antidiagonals are zero and each lower antidiagonal have all its elements equal.

For the second equation, note that $N^t$ and $A$ commute. This is equivalent to  $c_{i,j} = c_{t,s}$ when $j-i=s-t$ and $c_{i,1}= 0$ for $i>1$.
The first condition implies that each diagonal have all its elements equal, while the second  implies that the strict lower diagonals are zero.

Equation (\ref{(4)}) is analogous to equation (\ref{(1)}): the condition $N^tB$ symmetric is equivalent to each antidiagonal have all its elements equal and that the strict lower antidiagonals are 0.

From all this we conclude that the span of $\b A_i,\b A'_i,\b B_i,\b B'_i,\b C_i,\b C'_i$ with $1 \le i \le r$ is ${\mathfrak g}_0(\mathfrak n)$ and (1) follows.

(2) and (3) follow from (1) and the explicit presentation of the matrices  $\b A_i,\b A'_i,\b B_i,\b B'_i,\b C_i,\b C'_i$.

\end{proof}

\begin{corollary} (of the proof)
Let $\mathfrak{n}$ be fat. Then $\dim(\mathfrak g_0(\mathfrak n))$ is maximal  if and only if $\mathfrak{n}$ is of $H$-type.
\end{corollary}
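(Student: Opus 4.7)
The plan is to turn Lemma \ref{theorem3.2} into a strict inequality whenever $\mathfrak{n}$ fails to be of $H$-type. Write $\mathfrak{n} = \mathfrak{n}_{(\mathbf{c}, \mathbf{r})}$ with $s = r_1+\cdots+r_\ell$. By Lemma \ref{theorem3.2}, $\mathfrak{g}_0(\mathfrak{n}) \subseteq \mathfrak{g}_0(\hat{\mathfrak{n}})$, and since $\hat{\mathfrak{n}}$ is of $H$-type the right-hand side has the common value $\dim_{\mathbb{R}} \mathfrak{sp}(s,\mathbb{C}) = 4s^2+2s$ for every fat $\mathfrak{n}$ with $\dim \mathfrak{v} = 4s$. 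If $\mathfrak{n}$ itself is of $H$-type, then $\mathfrak{n} \cong \hat{\mathfrak{n}}$ by Proposition \ref{theorem3.1}(c) and equality is attained; this gives the ``if'' direction.

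For the converse, assume $\mathfrak{n}$ is not of $H$-type. By Proposition \ref{theorem3.1}(c), one of two alternatives must hold: (i) some $r_j \geq 2$, or (ii) all $r_j = 1$ but the $c_j$ are not all equal. In either case I would exhibit $X\in\mathfrak{sp}(\phi)$ with $[X,\hat{M}]=0$ and $[X,M]\neq 0$, that is, an element of $\mathfrak{g}_0(\hat{\mathfrak{n}})\setminus\mathfrak{g}_0(\mathfrak{n})$. In case (i), the proof of Theorem \ref{th3.3} supplies the template: writing $M=\hat{M}+N$ with $N$ the nilpotent part, the additional constraint $N_2 X+X^{\tt t} N_2=0$ imposed on top of $X\in\mathfrak{g}_0(\hat{\mathfrak{n}})$ is non-vacuous, reducing the dimension from $4r^2+2r$ down to $6r$ already in the single-block case $r>1$; the same analysis localized to a single oversized block of $(\mathbf{c},\mathbf{r})$ produces the required $X$. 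In case (ii), $M$ and $\hat{M}$ are both semisimple but their eigenspace decompositions of $\mathfrak{v}$ differ, and a symplectic map interchanging two summands of $\hat{M}$ corresponding to distinct $c_j\neq c_k$ centralizes $\hat{M}$ but not $M$.

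The main obstacle will be case (i) in the genuinely multi-block setting: the single-block obstruction has to be carried through the decomposition while the remaining blocks contribute freely to $\mathfrak{g}_0(\hat{\mathfrak{n}})$, and one must ensure that the obstruction is not absorbed by those free directions. Organizing the computation block-wise in the coordinates $\mathbf{A}_i, \mathbf{A}'_i, \mathbf{B}_i, \mathbf{B}'_i, \mathbf{C}_i, \mathbf{C}'_i$ introduced before Theorem \ref{th3.3}, and preserving the symplectic tie $J_1X+X^{\tt t} J_1=0$ throughout, should pinpoint, for every non-$H$-type $\mathfrak{n}$, at least one coordinate in $\mathfrak{g}_0(\hat{\mathfrak{n}})$ killed by the extra commutation with $M$. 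Strict inequality of dimensions, and hence the corollary, follows.
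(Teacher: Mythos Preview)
Your plan is essentially the paper's: use the inclusion $\mathfrak g_0(\mathfrak n)\subset\mathfrak g_0(\hat{\mathfrak n})$ from Lemma~\ref{theorem3.2} and exhibit an explicit element of the larger algebra not in the smaller one whenever $\mathfrak n$ is not of $H$-type. The paper organizes the case split the other way around---first by whether all $c_i$ coincide (if not, the missing off-block intertwiner gives the element; if so, normalize to $c_i=i$ and then some $r_j>1$ forces a nonzero subdiagonal entry in $N$, and the single matrix $X=\begin{pmatrix}A&0\\0&-A^*\end{pmatrix}$ with $a_{12}=\mathbf 1$ works). Your ``main obstacle'' in the multi-block version of case~(i) is not real: take the single-block obstruction supplied by Theorem~\ref{th3.3} on the oversized block and extend by zero on the remaining blocks; the result is block-diagonal, hence lies in $\mathfrak{sp}(\phi)$ and commutes with $\hat M$ globally, while it fails to commute with $M$ already on that one block.
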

\begin{proof}
Let $(\mathbf{c},\mathbf{r}) =((c_1,\ldots,c_l),(r_1,\ldots,r_l))$ be such that $\mathfrak{n}=\mathfrak{n}_{(\mathbf{c},\mathbf{r})}$.
We know that $\mathfrak g_0({\mathfrak n}) \subset \mathfrak g_0(\hat{\mathfrak n})$. If $c_i \not= c_j$ for some $i,j$, then there is not intertwining operator between
the blocks corresponding to these invariants, so $\mathfrak g_0({\mathfrak n}) \not= \mathfrak g_0(\hat{\mathfrak n})$.

When $c_1=c_2=\cdots=c_l$ we can consider $c_j =i$ for all $j$. Let $r = \sum r_i$. In this case  if $\begin{pmatrix} A&B\\C&-A^*\end{pmatrix} \in   \mathfrak g_0(\mathfrak n)$ must be satisfy the equations
(\ref{(1)}),  (\ref{(3)}), (\ref{(4)}) but with $N$ such that coefficients $n_{j+1,j}$   are $0$ or $\b 1$. Suppose now that $\mathfrak g_0(\mathfrak n)$ is not of $H$-type, then some $n_{j+1,j}$ is equal to $\b 1$. We assume that $n_{21} = \b 1$ and let  $A \in M_r(\mathbb R[\b i])$ such that $a_{12} = \b 1$ and $0$ otherwise, then
$$
X = \begin{pmatrix} A&0\\0&-A^*\end{pmatrix}
$$
belongs to  $\mathfrak g_0(\hat{\mathfrak n})$ but is not in $\mathfrak g_0({\mathfrak n})$.

\end{proof}

It can be shown in general that the semisimple part of $\mathfrak g_0(\mathfrak n)$ is isomorphic to $\oplus_i \mathfrak{sp}(m_i,\mathbb C)$, where $m_i$ is the multiplicity of the pair $(c_i,r_i)$ in $(\mathbf{c},\mathbf{r})$.

\

In the case $m=2$, $\mathfrak{g/g}_0$ is either $0$ or isomorphic to $\mathfrak{so}(2)$.

\begin{theorem}  $ \mathfrak g(\mathfrak n)/\mathfrak g_0(\mathfrak n) \cong \mathfrak{so}(2) $  if  $c_1=\cdots = c_\ell$, and $0$ otherwise.
\end{theorem}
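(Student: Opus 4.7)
The strategy is to combine the dimension bound from Corollary~\ref{corollary2.3} with the normal-form classification in Proposition~\ref{theorem3.1}. By Corollary~\ref{corollary2.3}, $\dim(G/G_0) \le m(m-1)/2 = 1$, so $\mathfrak g(\mathfrak n)/\mathfrak g_0(\mathfrak n)$ is either $0$ or isomorphic to $\mathfrak{so}(2)$; equivalently, after fixing the invariant metric on $\mathfrak z$ from Theorem~\ref{th2.1}, the identity component of $G/G_0 \subset O(\mathfrak z)$ is either trivial or all of $SO(2)$. It therefore suffices to decide when every rotation $R_\theta \in SO(\mathfrak z)$ is realized as the $\mathfrak z$-component of some element of $G(\mathfrak n)$.

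To answer this, I would reformulate the question through a M\"obius action on the invariant $\mathbf c$. By Proposition~\ref{theorem3.1}, pairs $(\phi,\psi)$ giving fat structures are classified up to the natural $GL(\mathfrak v) \times GL(\mathfrak z)$-action by the invariants $(\mathbf c,\mathbf r)$, with $\mathbf c$ considered modulo componentwise M\"obius equivalence. Hence $b \in GL(\mathfrak z)$ is realized as the $\mathfrak z$-component of an element of $G$ if and only if the associated M\"obius transformation $\tilde b$ of $\mathbb H$ permutes the multiset $\{(c_j,r_j)\}$. The rotation subgroup $SO(\mathfrak z)$, being compact, maps under $b \mapsto \tilde b$ to a compact (hence elliptic) 1-parameter subgroup $\{\tilde R_\theta\}$ of $PSL(2,\mathbb R)$, which has a unique fixed point $c_* \in \mathbb H$.

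The case analysis is now immediate. If all $c_j$ coincide, apply Proposition~\ref{theorem3.1}(b) to arrange $c_j = c_*$ for every $j$; then each $\tilde R_\theta$ fixes every $c_j$ pointwise, so the whole $SO(2)$ lifts to $G/G_0$ and $\mathfrak g/\mathfrak g_0 \cong \mathfrak{so}(2)$. Conversely, suppose that not all $c_j$ are equal and yet $SO(2) \subseteq G/G_0$. Each $\tilde R_\theta$ then permutes $\{c_j\}$ respecting the $r$-partition; since the permutations form a discrete set and $\theta \mapsto \tilde R_\theta$ is continuous with $\tilde R_0 = \mathrm{id}$, the induced permutation is the identity for $\theta$ near $0$, so every $c_j$ is a fixed point of the nontrivial elliptic $\tilde R_\theta$. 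But an elliptic element of $PSL(2,\mathbb R)$ has a unique fixed point in $\mathbb H$, namely $c_*$; hence all $c_j$ equal $c_*$, contradicting the hypothesis.

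The main obstacle is the M\"obius correspondence invoked in the second paragraph: one must verify carefully that the $GL(\mathfrak z)$-action on pairs $(\phi,\psi)$ does induce precisely the componentwise M\"obius action on $\mathbf c$ described by Proposition~\ref{theorem3.1}, and that the compactness of $SO(\mathfrak z)$ (forced by Theorem~\ref{th2.1}) really translates into $\{\tilde R_\theta\}$ being an elliptic 1-parameter subgroup whose unique fixed point in $\mathbb H$ is the M\"obius normal-form target. Once these identifications are secured, the remainder of the argument reduces to the classical fixed-point classification of elements of $PSL(2,\mathbb R)$ acting on the upper half-plane.
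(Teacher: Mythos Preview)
Your approach is correct in outline and genuinely different from the paper's. The paper argues by direct construction: for a single block $\mathfrak{n}_{(i,r)}$ it writes down an explicit matrix $X_0$ (built from a specific lower-triangular $Y_0$) for which $\begin{pmatrix}X_0&0\\0&\mathbf i\end{pmatrix}$ is a derivation; for several blocks it then observes that any derivation with nonzero $\mathfrak z$-part must restrict to such a block-derivation on each $(c_k,r_k)$, with the $\mathfrak z$-components $g_k\mathbf i g_k^{-1}$ forced to coincide, whence all $c_k$ agree. Your argument instead extracts the result from the classification: the $GL(\mathfrak z)$-action on the normal form corresponds to the M\"obius action on $\mathbf c$, so $b$ lifts to $G$ exactly when $\tilde b$ permutes the multiset $\{(c_j,r_j)\}$; since the identity component of $G/G_0$ is a compact one-parameter group, its image in $PSL(2,\mathbb R)$ is elliptic with a unique fixed point, and the continuity/discreteness argument forces all $c_j$ to equal that point. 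The paper's route yields explicit derivations (useful for the matrix-computation viewpoint advertised in the introduction), while yours is cleaner and makes transparent why Proposition~\ref{theorem3.1}(b) is the controlling ingredient.

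The gap you flag is real but fillable. What you need beyond Proposition~\ref{theorem3.1}(b) is that $(\mathbf c,\mathbf r)$, and not merely its M\"obius class, is a \emph{complete} invariant of the pair $(\phi,\psi)$ under the $GL(\mathfrak v)$-action alone. One direction is easy: changing basis in $\mathfrak v$ by $a$ replaces $M_{(\mathbf c,\mathbf r)}$ by $a^{-1}M_{(\mathbf c,\mathbf r)}a$, preserving its Jordan data, while acting by $b\in GL(\mathfrak z)$ replaces $M$ by $(b_{11}I+b_{12}M)^{-1}(b_{21}I+b_{22}M)$ and hence each eigenvalue $c_j$ by $\tilde b(c_j)$. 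The delicate point is the converse for the ``if'' direction: when $\tilde b$ fixes the multiset, you must actually produce an $a$ that \emph{simultaneously} intertwines $\phi$ with its $b$-transform and conjugates $M$ to its $\tilde b$-transform; this is exactly the content of the symplectic pencil normal-form theory in \cite{LT}, so once you invoke it explicitly your argument closes. Finally, the $SL$ versus $GL$ constraint on $a$ is harmless at the Lie-algebra level: any trace can be absorbed into the dilation derivation $(I_{\mathfrak v},2I_{\mathfrak z})$ without annihilating a nonzero skew-symmetric $\mathfrak z$-component.
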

 \begin{proof}
 $ \mathfrak {g /\mathfrak g}_0 $ is a compact subalgebra of $\mathfrak{gl}(2) $, hence of the form $g \mathfrak{so}(2) g^{-1}$ for some $g\in SL(2,\R)$ and it is nonzero if and only if there exists $X\in \mathfrak{sl}(v)$ such that, in the notation of the proof of Theorem \ref{th3.3},
 $$
 \begin{pmatrix} X&0\\0&g\mathbf{i}g^{-1}\end{pmatrix}
 $$
 is a derivation of $\mathfrak{n}$. For $g=\mathbf{1}$, if $T_1,T_2$ correspond to the standard basis of $\mathfrak{z}$, the equations  for $X$ become
 $$
\text{\em (a)}\quad T_1 X + X^\T T_1 = T_2, \qquad \text{\em (b)}\quad T_2 X + X^\T T_2 = -T_1
$$
In normal form, and for a single block $A_{(i,r)}$,
$$
T_1=J_1 = \begin{pmatrix} 0&I_r\\-I_r&0\end{pmatrix}, \quad
T_2 =  \begin{pmatrix} 0_{  }&\b iI_r +N \\\b iI_r -N^\T & 0_{  } \end{pmatrix}.
$$
We decompose
$$
T_2 = J_2+N_2,\qquad \mathrm{with}\qquad J_2 =  \begin{pmatrix} 0&\b iI_r\\\b iI_r&0\end{pmatrix},\ N_2=
\begin{pmatrix} 0_{  }& N \\ -N^\T & 0_{  } \end{pmatrix}
$$
and regard $J_1,J_2,T_1,T_2, N_2$ as matrices with coefficients in $M_2(\mathbb R)$. Note that $J_1,J_2$ correspond to  $\mathfrak{\hat n}$, of type $H$.
Let
$$
Y_0 = \begin{pmatrix}
   0&0  &0&0&0&0&s&0
\\ 0&2\b i&0&0&0&&s&0
\\ 0&\b 1&4  \b i&0&0&0& s&0
\\ 0&0&2\b 1&6\b i&0&0&s&0
\\ 0&0&0& 3 \b 1&8\b i& & s&0
\\ \vdots& \vdots&\vdots & \vdots& \vdots& \vdots& \ddots&0
\\ 0&0&0& \vdots& \vdots& 0& (n-2)\b 1&2(n-1) \b i\end{pmatrix}.
$$
A straightforward calculation shows that
$$
X_0= \begin{pmatrix} -Y_0^\T &0\\ 0& -Y^\T_0+ \b iI_r +N\end{pmatrix}
$$
is a solution of (a), (b). We conclude that
$$
 \begin{pmatrix}X_0 &0\\ 0&\b i\end{pmatrix}
$$
is a derivation of $\mathfrak{n}_{(i,r)}$, which lies in $\mathfrak{g}(\mathfrak{n}_{(i,r)})$ but not in $\mathfrak{g}_0(\mathfrak{n}_{(i,r)})$.

For any $c\in \mathbb{U}$, $\mathfrak{n}_{(c,r)}\cong \mathfrak{n}_{(i,r)}$, hence they have the same $\mathfrak{g/g}_0$ up to isomorphisms. In fact, for any $g\in Sl(2,\R)$, the algebra $\mathfrak{n}_{(g\cdot i,r)}$ has a derivation of the form
$$
 \begin{pmatrix}X &0\\ 0&g\b ig^{-1}\end{pmatrix}.
$$
For a fixed $g$, these $X$ are unique modulo $\mathfrak{g}_0$ and come in normal form. Clearly, $c$  determines the $2\times 2$ matrix $ g\b ig^{-1}$ and the complex number $g\cdot i$.

In the case of an arbitrary fat $\mathfrak{n}_{(\mathbf{c},\mathbf{r})}$, each block $(c_k,r_k)$ determines a corresponding
$X_k$ such that
$$
 \begin{pmatrix}X_k &0\\ 0& g_k\b i\g^{-1}_k\end{pmatrix}
$$
is a derivation of $\mathfrak{n}_{(c_k,r_k)}$. If $n_{(\mathbf{c},\mathbf{r})}$ has a derivation in $\mathfrak{g}$ that is not in $\mathfrak{g}_0$, then its must have one which is combination of such, acting on $\mathfrak{v}$ as $X_1+ X_2 + \cdots$. This forces
all the $ g_k\b ig_k^{-1}$ to be the same and all the $c_i$ to be the same. The reciprocal is clear.
\end{proof}

In particular, all algebras $\mathfrak{n}_{(\mathbf{c},\mathbf{r})}$ with $c_1=...=c_\ell$ and $r_i>1$ maximize de dimension of  $\mathfrak{g/g}_0$, but they are not type $H$.

Lauret had pointed out to us that there were non-type H algebras such that $ \mathfrak g(\mathfrak n)/\mathfrak g_0(\mathfrak n) \not= 0 $. Independently, Oscari proved that this holds whenever the $c_i$'s all agree.
\end{section}

\begin{section}{Fat distributions}\label{section4} Let $D$ be a smooth vector distribution on a smooth manifold $M$, i.e., a subbundle of the tangent bundle $T(M)$. Its nilpotentization, or  symbol, is the  bundle on $M$ with fiber
$$N^D(M)_p = \bigoplus_j D^{(^j)}_p / D^{(j-1)}_p $$ where
$D^{(1)}_p=D_p$ and $D^{(j+1)}_p = D^{(j)}_p + [\Gamma(D),  \Gamma(D^{j})]_p$. The Lie bracket in $\Gamma(T(M))$ induces
a graded nilpotent Lie algebra structure on each fiber of $N^D(M)$. If $D^{(j)} =T(M)$ for some $j$, $D$ is called completely non-integrable. If $D^{(2)} =T(M)$,
the nilpotentization is 2-step, which in  the notation of the previous section, is
$$\mathfrak{n}_p= N_D(M)_p = D_p \oplus  \frac{D_p+[\Gamma(D),  \Gamma(D)]_p}{D_p} = \mathfrak{v}_p+\mathfrak{z}_p,$$
It is also easy to see that $D$ is  fat in the sense of Weinstein {\cite M} if and only if $\mathfrak{n}_p=  \mathfrak{v}+\mathfrak{z}$ is non-singular, i.e., fat in the sense defined in the section 1.

A subriemannian metric $g$  defined on $D$  determines a metric on $\mathfrak{v}$. On $\mathfrak{z}$ we put a metric $\sigma$ invariant under $G$.
Let $\{\phi_1,...,\phi_m; \psi_1,...,\psi_n\}$ be a coframe on $M$ such that
$$D=\cap \ker \phi_i,$$
with $\{\phi_1,...,\phi_m\}$ and  $\{\psi_1,...,\psi_n\}$ orthonormal with respect to $g+\sigma$. Define  $T_z\in \End(D)$ as before, by
$$ \sigma(z,[u,v]) = g(T_zu,v).$$
Then $D$ is fat if and only if $T_z$ is invertible for all non-zero $z\in \mathfrak{z}$. The structure equations for the coframe can be written
$$d\phi_k \equiv \sum_i (T_k \psi_i)\wedge \psi_i \qquad mod(\phi_\ell)$$
with the $T_k$'s having the property that any non-zero linear combination of them is invertible. This is deduced from the fact that if $u,v\in \mathfrak{v}$, then $d\phi [u,v] = - \phi([u,v])$, since $u(\phi(v)) = u (0)=0$. The $d\psi$'s are essentially arbitrary.

Let now $M$ be a the simply connected Lie group with a fat Lie algebra $\mathfrak{n}$,   $D$ the left-invariant distribution on $M$ such that $D_e=\mathfrak{v}$. For a left-invariant coframe, the structure equations take the form
 $$d\phi_k = \sum_i (J_k \psi_i)\wedge \psi_i, \qquad d\psi_i =0 $$
 where $J_1,...,J_m$ are anticommuting complex structures on $D$.

 The results from the previous sections lead to consider fat distributions satisfying
$$ d\phi_k = \sum_i (J_k \psi_i)\wedge \psi_i \qquad mod(\phi_\ell) \leqno{(4.1)}$$
where the $J_k$ are sections of $\End(T(M)^*)$ satisfying the Canonical Commutation Relations
$$J_iJ_j + J_jJ_i = -2 \delta_{ij}.$$

The Equivalence Problem for these systems has been discussed for distributions with growth vector $(2n,2n+1), (4n,4n+3)$ and $(8,15)$. In these cases  $\mathfrak{n}$ is parabolic, i.e., isomorphic to the Iwasawa subalgebra of a real semisimple Lie algebra $\mathfrak{g}$ of real rank one. The Tanaka \cite{T} subriemannian prolongation of such algebra is $\mathfrak{g}$, while in the non-parabolic case
is just
$$\mathfrak{n}+ \mathfrak{k(n)}+ \mathfrak{a}(\mathfrak{n})$$
where $\mathfrak{a}(\mathfrak{n})$ the 1-dimensional Lie algebra of dilations \cite{Su}. In this case,Tanaka's theorem implies that, in the notation of \cite{Z}, the first pseudo G-structure $P^0$ already carries a canonical frame.

As this paper was being written, E. van Erp pointed out to us his article \cite{Er}, where fat distributions are called polycontact and those satisfying  (4.1) arise by imposing a compatible conformal structure.

\end{section}

\vskip .3cm
\begin{center} \bf Acknowledgments\end{center}
We wish to thank Professor J. Vargas for helpful discussions and M. Subils for pointing out a mistake in a previous version of this paper.

\end{document}